\newtheorem{theorem}{Theorem}
\newtheorem{corollary}[theorem]{Corollary}
\newtheorem{definition}[theorem]{Definition}
\newtheorem{lemma}[theorem]{Lemma}
\newtheorem{remark}[theorem]{Remark}
\newenvironment{proof}[1][Proof]{\noindent\textbf{#1.} }{\ \rule{0.5em}{0.5em}}
\renewcommand\sb{\subseteq}
\newcommand\sm{\setminus}
\newcommand\mto{\mapsto}
\newcommand\lmto{\longmapsto}
\newcommand\cC{\mathcal{C}}
\newcommand\cD{\mathcal{D}}
\newcommand\cS{\mathcal{S}}
\DeclareMathOperator\sn{sn}
\DeclareMathOperator\cs{\mathfrak{s}}
\DeclareMathOperator\crr{\mathfrak{r}}
\DeclareMathOperator\sech{sech}
\begin{document}

\title{The Moments of L\'{e}vy's area using a sticky
shuffle Hopf algebra}
\author{Robin Hudson \\
Loughborough University,\\
Loughborough LE11 3TU\\
Great Britain \and Uwe Schauz and Wu Yue \\
Xi'an Jiaotong-Liverpool University,\\
Suzhou,\\
China}
\maketitle

\begin{abstract}
L\'{e}vy's stochastic area for planar Brownian motion is the difference of two iterated
integrals of second rank against its component one-dimen\-sional Brownian motions.
Such iterated integrals can be multiplied using the sticky shuffle product determined
by the underlying It\^{o} algebra of stochastic differentials. We use combinatorial
enumerations that arise from the distributive law in the corresponding Hopf algebra
structure to evaluate the moments of L\'{e}vy's area. These L\'{e}vy moments are well
known to be given essentially by the Euler numbers. This has recently been confirmed
in a novel combinatorial approach by Levin and Wildon. Our combinatorial
calculations considerably simplify their approach.

\emph{Keywords: }L\'{e}vy area; sticky shuffle algebras; Euler numbers.

\emph{MSC classifications:} 60J65; 05A15.
\end{abstract}

\section{Introduction}

L\'{e}vy's stochastic area is the signed area enclosed by the planar Brownian path and
its chord. It was originally defined rigorously by L\'{e}vy \cite{Levy} and now is
intensively applied in several areas of modern mathematics, such as rough path
analysis.

Let $B=\left( X,Y\right)$ be a planar Brownian motion in terms of components $X$ and
$Y$ which are independent one-dimensional Brownian motions.

\begin{definition}\label{defi1}
\emph{The L\'{e}vy area of }$B$\emph{\ over the time
interval }$[a,b)$\emph{\ is the stochastic integral }%
\begin{equation*}
\mathcal{A}_{[a,b)}=\dfrac{1}{2}\int_{a}^{b}\Bigl(\left( X-X(a)
\right) dY-\left( Y-Y( a) \right) dX\Bigr).  \label{Levyarea}
\end{equation*}
\end{definition}
In this definition the integral takes the same value whether it is regarded as of It\^{o} or
Stratonovich type, but in the remainder of this paper all stochastic integrals will be of
It\^{o} type, in contrast to \cite{LeWi} where the Stratonovich integral is used.

L\'{e}vy studied the characteristic function in \cite{Levy}, \cite{Levy2}, \cite{Levy3},
\cite{Levy4} and \cite{Levy1}. He derived the following formula:
\begin{theorem}\label{Levy Sech}
(L\'{e}vy \cite{Levy})
\begin{equation*}
\mathbb{E}\!\left[\exp \left(iz\mathcal{A}_{[a,b)}\right)\right]\,=\,\sech\!\left(%
\tfrac{1}{2}%
\left( b-a\right) z\right).  
\end{equation*}
\end{theorem}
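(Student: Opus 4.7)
The plan is to derive the closed form for the characteristic function by computing the moments $\mathbb{E}[\mathcal{A}_{[a,b)}^n]$ combinatorially and recognising the resulting integers as Euler numbers, whose exponential generating function is $\sech$. Since odd moments vanish by the symmetry $(X,Y)\mapsto(-X,Y)$ which negates $\mathcal{A}$ while preserving its law, only even moments need to be computed.

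First I would write $\mathcal{A}_{[a,b)}=\tfrac{1}{2}(I_{xy}-I_{yx})$, where $I_{xy}$ and $I_{yx}$ are the rank-$2$ iterated It\^{o} integrals encoded by the two-letter words $xy$ and $yx$ in the differentials $dX,dY$. Next I would expand $\mathcal{A}^{n}$ using the sticky shuffle product that governs products of iterated It\^{o} integrals, where the ``stickiness'' comes from the It\^{o} table $dX\,dX=dY\,dY=dt$, $dX\,dY=dY\,dX=0$. The result is a signed sum of iterated integrals indexed by sticky shuffles of $n$ copies of the length-two letter strings coming from $\tfrac{1}{2}(xy-yx)$.

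I would then take the expectation termwise. The crucial fact is that $\mathbb{E}[I_{w}]$ vanishes unless every letter in $w$ is the time differential $dt$, in which case $\mathbb{E}[I_{w}]=(b-a)^{|w|}/|w|!$. So only those sticky-shuffle terms in which every $x$ is paired with another $x$ (and every $y$ with another $y$) through the sticky contractions survive, and they each contribute a power of $(b-a)$ weighted by $1/n!$ together with the sign inherited from the $(xy-yx)$ factors.

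The main obstacle is the enumeration in this last step: showing that the signed count of surviving sticky-contracted terms equals the $n$-th Euler number up to the appropriate factors of $\tfrac{1}{2}$ and $i$. Here I expect the distributive law of the sticky shuffle Hopf algebra to provide the organising principle, translating the count into an identity involving alternating or zigzag-type combinatorial objects enumerated by the Euler numbers. Once the identity $\mathbb{E}[\mathcal{A}_{[a,b)}^{2n}]=(-1)^{n}E_{2n}(b-a)^{2n}/4^{n}$ is established, substitution into $\mathbb{E}[\exp(iz\mathcal{A}_{[a,b)})]=\sum_{n\ge 0}(iz)^{n}\mathbb{E}[\mathcal{A}_{[a,b)}^{n}]/n!$ and comparison with the Taylor series of $\sech$ at once yields the stated formula.
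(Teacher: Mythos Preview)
Your outline is exactly the paper's strategy: write $\mathcal{A}$ as a rank-$2$ iterated integral, expand powers via the sticky shuffle product, take expectations so that only pure-$dT$ words survive, and identify the resulting signed count with an Euler number. The paper does not prove Theorem~\ref{Levy Sech} independently but rather establishes the equivalent moment formula $\mathbb{E}[\mathcal{A}_{[a,b)}^{2m}]=((b-a)/2)^{2m}A_{2m}$, just as you propose.

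However, the step you label ``the main obstacle'' is essentially the entire content of the proof, and your proposal contains no argument for it beyond an expectation that the Hopf algebra distributive law will ``provide the organising principle''. The paper's resolution is not routine: using the recovery formula and multiplicativity of $\Delta^{(n)}$ it isolates the rank-$n$ component of $\{dX\otimes dY-dY\otimes dX\}^n$ as a sum over $n$-tuples of arcs $(h,k)$, then shows that the summands producing $dT^{\otimes n}$ correspond to vertex-disjoint covers of $\{1,\dots,n\}$ by alternatingly oriented even cycles (forcing $n=2m$). After re-orienting and unlabelling (picking up a factor $(-1)^m(2m)!$), these are identified with permutations $\mathfrak{s}\in\mathcal{S}_{2m}$ having only even cycles, weighted by $\sn(\mathfrak{s})=\prod_j\sn(j,\mathfrak{s}(j))$. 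A sign-reversing involution on permutations possessing a ``transit'' vertex $h$ (one with $\mathfrak{s}^{-1}(h)<h<\mathfrak{s}(h)$ or the reverse) cancels all but the ``forth-back'' permutations, and these are matched bijectively with zigzag permutations via the Foata--Sch\"utzenberger \emph{transformation fondamentale}, yielding the count $A_{2m}$. None of this machinery---the digraph interpretation, the transit involution, or the bijection to zigzag permutations---appears in your proposal, so while your scaffolding is correct, the substantive combinatorics remains to be supplied.
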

We can expand the right-hand side of the formula in Theorem\,\ref{Levy Sech} using
the Taylor series
\begin{equation}
\sech(z)\,=\,\sum_{m=0}^{\infty }\left( -1\right) ^{m}\frac{A_{2m}}{\left(
2m\right) !}z^{2m}\ ,
\end{equation}%
where the even Euler zigzag numbers $A_{2m}$ are related to the Riemann zeta
function $\zeta $ by%
\begin{equation}
\zeta \left( 2m\right)\,=\,\frac{\pi ^{2m}}{\left( 2m\right) !}A_{2m}\,.
\end{equation}
This expansion shows that the nonvanishing moments of the L\'{e}vy area
$\mathcal{A}_{[a,b)}$ are given by
\begin{equation}\label{Levy Mom}
\mathbb{E}\!\left[ \mathcal{A}_{[a,b)}\right]^{2m}\,=\,\,\left(\frac{ b-a}{2}\right)^{\!2m}\!A_{2m}\,.
\end{equation}

In \cite{Levy1}, L\'{e}vy first showed Theorem \ref{Levy Sech} by using dyadic
approximation. His second proof is based on the skew product representation of planar
Brownian motion and depends on earlier work by Kac, Siegert, Cameron and Martin
(see \cite{Levy1}). In 1980, Yor \cite{Yor} simplified L\'{e}vy's proof by employing a
result on Bessel processes and an elementary result used by D.Williams. Shortly
afterwards, Helmes and Schwane \cite{Helmes} revisited the problem by extending the
$2$-dimensional set-up considered by  L\'{e}vy to $d\geq 2$ dimensions. They
considered the joint characteristic function of the stochastic process paired by the
$d$-dimensional Brownian motion $W$ and a certain generalized L\'{e}vy area
$\mathcal{A}_{[0,t]}^{J,x}$ given by
\begin{equation}
\mathcal{A}_{[0,t]}^{J,x}:=\int_0^t [W(s)+x(s)]
J(s) dW(s).
\end{equation}
Here, $[W(s)+x(s)]$ is viewed as a row-vector, a $1\times d$ matrix. 
For $d=2$, $x=0$ and $J=\bigl[ \begin{smallmatrix}
0&1\\ \!\!-1&0
\end{smallmatrix} \bigr]$ the process $\mathcal{A}_{[0,t]}^{J,x}/2$
coincides with L\'{e}vy stochastic area $\mathcal{A}_{[0,t]}$ in Definition\,\ref{defi1}.
Recently Levin and Wildon in \cite{LeWi} used iterated integrals and combinatorial
arguments involving the shuffle product (see \cite{Hoff}) to prove Theorem \ref{Levy
Sech}. Our method starts from the same iterated integrals. Hence, we use that the
integral in Definition\,\ref{defi1} can be written as
\begin{eqnarray}
\mathcal{A}_{[a,b)} &=&\dfrac{1}{2}\int_{a\,<x<\,y<\,b}\Bigl(
dX(x)dY(y)-dY(x) dX(y) \Bigr).  \label{classical}
\end{eqnarray}%
We may thus evaluate the moments as expectations of powers, using the so-called
\emph{sticky shuffle }\cite{Huds1} Hopf algebra. The multiplication in this algebra can
be used to express the product of two iterated It\^{o} stochastic integrals as a linear
combination of such iterated integrals. Since the expectation of an iterated integral
vanishes
unless each of the individual integrators is time, the \emph{recovery formula} \cite%
{Huds1,Bour}\ involving higher order Hopf algebra coproducts reduces the evaluation
of the moments to a combinatorial counting problem. Similar ideas were already used
in \cite{HuUwe} to calculate the moments in non-Fock quantum analogs of L\'{e}vy's area.

The sticky shuffle Hopf algebra is reviewed in Section\,2 and its use for reducing the
evaluation of moments to a counting problem is described in Section\,3. In Section\,4,
we apply combinatorial tools to show the main result. Finally, in the appendix, we
provide a simple lemmas about Euler numbers needed in our calculations.

\section{The sticky shuffle product Hopf algebra}

Let there be given an associative algebra $\mathcal{L}$ over $\mathbb{C}$.   The
corresponding vector space $\mathcal{T}(\mathcal{L})$ of tensors of all ranks over
$\mathcal{L}$ is defined as
\begin{equation}
\mathcal{T}(\mathcal{L})\,=\,\bigoplus\limits_{n=0}^{\infty
} 
\bigotimes\limits_{j=1}^{n}\mathcal{L}\,
.
\end{equation}
We denote by $\left( \alpha _{0},\alpha
_{1},\alpha _{2},...\right)$ the general element $\alpha=\alpha _{0}\oplus \alpha
_{1}\oplus \alpha _{2}\oplus \cdots $
of $\mathcal{T}(\mathcal{L})$, where only finitely many of the $\alpha _{m}$
are nonzero. For each $\alpha _{m}\in $ $\bigotimes_{j=1}^{m}%
\mathcal{L}$ the corresponding embedded element $\left( 0,0,...,\alpha
_{m},0,...\right) $ of $\mathcal{T}( \mathcal{L}) $ is denoted by
$\left\{ \alpha _{m}\right\} .$ In the following, we use the notational convention that, for arbitrary
elements $\alpha $ of $\mathcal{T}( \mathcal{L}) $ and $L$ of $%
\mathcal{L,}$ $\alpha \otimes L$ is the element of $\mathcal{T}(\mathcal{L}) $ for which $\left( \alpha \otimes L\right) _{0}=0$ and $%
\left( \alpha \otimes L\right) _{n}=\alpha _{n-1}\otimes L$ for $n\geq 1.$

The so-called \textit{sticky shuffle product Hopf algebra} over $\mathcal{L}$\ is formed
by equipping  $\mathcal{T}(\mathcal{L})$ with the operations of product, unit, coproduct
and counit defined as follows.

\begin{itemize}
\item The \textit{sticky shuffle product} of arbitrary elements of $\mathcal{T}(
    \mathcal{L}) $ is defined inductively by bilinear extension of the
rules%
\begin{eqnarray}
\{ 1_{\mathbb{C}}\}\left\{ L_{1}\otimes L_{2}\otimes
\cdots \otimes L_{m}\right\} &=&\left\{ L_{1}\otimes L_{2}\otimes \cdots
\otimes L_{m}\right\}\{ 1_{\mathbb{C}}\}\notag\\
&=&\left\{ L_{1}\otimes L_{2}\otimes \cdots \otimes L_{m}\right\} ,
\end{eqnarray}%
\begin{eqnarray}
&&\!\!\!\!\!\!\!\!\!\!\!\!\!\left\{ L_{1}\otimes L_{2}\otimes \cdots \otimes L_{m}\right\} \left\{
L_{m+1}\otimes L_{m+2}\otimes \cdots \otimes L_{m+n}\right\}  \notag \\
&=&\left( \left\{ L_{1}\otimes \cdots \otimes L_{m-1}\right\}
\left\{ L_{m+1}\otimes \cdots \otimes L_{m+n}\right\} \right)
\otimes L_{m}  \notag \\
&&+\left( \left\{ L_{1}\otimes \cdots \otimes L_{m}\right\}
\left\{ L_{m+1}\otimes \cdots \otimes L_{m+n-1}\right\}
\right) \otimes L_{m+n}  \label{sticky} \\
&&+\left( \left\{ L_{1}\otimes \cdots \otimes L_{m-1}\right\}
\left\{ L_{m+1}\otimes \cdots \otimes L_{m+n-1}\right\}
\right) \otimes L_{m}L_{m+n}.  \notag
\end{eqnarray}%

\item The \textit{unit element} for this product is $1_{\mathcal{T}(\mathcal{L}) }=\left(
    1_{\mathbb{C}},0,0,...\right) $.

\item The \textit{coproduct} $\Delta $ is the map from $\mathcal{T}(\mathcal{L}) $ to
    $\mathcal{T}(\mathcal{L}) \otimes \mathcal{T}(\mathcal{L}) $ defined by linear
    extension of the rules that $\Delta \left( 1_{\mathcal{T}(\mathcal{L}) }\right)
    =1_{\mathcal{T}(\mathcal{L}) }\otimes 1_{\mathcal{T}(\mathcal{L})
    }=1_{\mathcal{T}(\mathcal{L}) \otimes \mathcal{T}(\mathcal{L}) }$ and
\begin{eqnarray}
&&\!\!\!\!\!\!\!\!\!\!\!\!\!\Delta \left\{ L_{1}\otimes L_{2}\otimes \cdots \otimes L_{m}\right\}\notag \\
&=&1_{\mathcal{T}(\mathcal{L}) }\otimes \left\{ L_{1}\otimes
L_{2}\otimes \cdots \otimes L_{m}\right\} \\
&&+\sum\limits_{j=2}^{m}\left\{ L_{1}\otimes L_{2}\otimes \cdots \otimes
L_{j-1}\right\} \otimes \left\{ L_{j}\otimes L_{j+1}\otimes \cdots \otimes
L_{m}\right\}\notag \\
&&+\left\{ L_{1}\otimes L_{2}\otimes \cdots \otimes L_{m}\right\} \otimes 1_{%
\mathcal{T}(\mathcal{L}) }.\notag
\end{eqnarray}

\item The \textit{counit} $\varepsilon $ is the map from $\mathcal{T}(\mathcal{L}) $ to $\mathbb{C}$\ defined by linear extension of%
\begin{equation}
\varepsilon \left( 1_{\mathcal{T}(\mathcal{L}) }\right) =1_{%
\mathbb{C}}\ \text{ and }\ \varepsilon \left\{ L_{1}\otimes L_{2}\otimes \cdots
\otimes L_{m}\right\} =0\ \text{ for }\ m>0.
\end{equation}
\end{itemize}
\begin{remark}There is a useful alternative equivalent definition of the sticky shuffle product.
We can define the product $\gamma =\alpha \beta $ by%
\begin{equation}
\gamma _{N}\,=\sum_{A\cup B=\{1,2,...,N\}}\alpha _{\left\vert A\right\vert
}^{A}\beta _{\left\vert B\right\vert }^{B}.  \label{shuffle'}
\end{equation}%
Here the sum is now over the $3^{N}$ not necessarily disjoint ordered pairs $%
(A,B)$ whose union is $\{1,2,...,N\}$, and the notation is as follows; $%
\left\vert A\right\vert $ denotes the number of elements in the set $A$ so that $\alpha
_{\left\vert A\right\vert }$ denotes the homogeneous component of rank $\left\vert
A\right\vert $ of the tensor $\alpha =\left( \alpha _{0},\alpha _{1},\alpha _{2},...\right) ,$
and $\alpha _{\left\vert A\right\vert }^{A}$ indicates that this component is to be
regarded as occupying only those $\left\vert A\right\vert $ copies of $\mathcal{L}$
within $\bigotimes_{j=1}^{N}\mathcal{L}$ labelled by elements of the subset $A$ of
$\{1,2,...,N\}.$ Thus with $\beta _{\left\vert B\right\vert }^{B}$ defined analogously the
combination $\alpha _{\left\vert A\right\vert
}^{A}\beta _{\left\vert B\right\vert }^{B}$ is a well-defined element of $%
\bigotimes_{j=1}^{N}\mathcal{L}$. Here, if $A\cap B\neq \emptyset$, double occupancy
of a copy of $\mathcal{L}$ within $\bigotimes_{j=1}^{n}\mathcal{L}$ is reduced to single
occupancy by using the multiplication in the algebra $\mathcal{L}$ as a map from
$\mathcal{L\times L}$ to $\mathcal{L.}$
That (\ref{shuffle'}) is equivalent to (\ref{sticky}) is seen by noting that the three terms
on the right-hand side of (\ref{sticky}) correspond to the three mutually exclusive and
exhaustive possibilities that $N\in A\cap B^{c},$ $N\in A^{c}\cap B$ and $N\in A\cap B$
in (\ref{shuffle'}).
\end{remark}

The \emph{recovery formula }\cite{Bour}\emph{\ }expresses the homogeneous
components of an element $\alpha $ of $\mathcal{T}(\mathcal{L}) $ in terms of the
iterated coproduct $\Delta ^{(N)}\alpha$ by
\begin{equation}
\alpha _{N}=\left( \Delta ^{(N)}\alpha \right) _{(1,1,...,\overset{(N)}{1})}.
\label{recovery}
\end{equation}%
Here, $\Delta ^{(N)}$ is defined recursively by
\begin{equation}
\Delta ^{(2)}=\Delta\quad\text{and}\quad\Delta ^{(N)}=\left( \Delta \otimes \mbox{Id}_{\otimes ^{(N-2)}\left( \mathcal{T} \left(
\mathcal{L}\right) \right) }\right)\circ \Delta ^{(N-1)}\quad\text{for}\quad N>2\,.
\end{equation}
Hence, it is a map from $\mathcal{T}(\mathcal{L}) $ to the $N$th tensor power
\begin{equation}
\bigotimes\!^{(N)}\mathcal{T}(\mathcal{L})
\,=\,\bigotimes\!^{(N)}\bigoplus\limits_{n=0}^{\infty}\ \bigotimes\limits_{j=1}^{n}\mathcal{L}
\,=\bigoplus\limits_{n_{1},n_{2},...,n_{N}=0}^{\infty}\ \bigotimes_{r=1}^{N}\ \bigotimes\limits_{j_{r}=1}^{n_{r}}\mathcal{L}%
\end{equation}%
so that $\Delta ^{(N)}\alpha $ has multirank components $\alpha _{\left(
n_{1},n_{2},...,n_{N}\right) }$ of all orders. The recovery formula (\ref%
{recovery}) also holds when $N=0$ and $N=1$ if we define $\Delta ^{(0)}$ and
$\Delta ^{(1)}$ to be the counit $\varepsilon $ and the identity map $1_{%
\mathcal{T}(\mathcal{L}) }$ respectively.

Note that\ $\Delta $ is multiplicative, $\Delta \left( \alpha \beta \right) =\Delta \left( \alpha
\right) \Delta \left( \beta \right)$, where the product on the tensor square
$\mathcal{T}(\mathcal{L})\otimes\mathcal{T}(\mathcal{L})$ is defined by linear
extension of the rule
\begin{equation}\label{eq.power}
(a\otimes a')(b\otimes b')\,=\,ab\otimes a'b'\,.
\end{equation}

\section{Moments and sticky shuffles}

We now describe the connection between sticky shuffle products and iterated
stochastic integrals. We begin with the well-known fact that, for the
one-dimensional Brownian motion $X$ and for $a\leq b,$%
\begin{equation}
\left( X(b)-X(a\right) )^{2}\,=\,2\!\int_{a\leq x<b}\!\!\!\!\!\!\!\!\left( X(x)-X(a)
\right) dX(x)\ +\,\int_{a\leq x<b}\!\!\!\!\!\!\!\!dT(x),  \label{k}
\end{equation}%
where $T(x)=x$ is time. We introduce the \emph{It\^{o} algebra} $\mathcal{L=}%
\mathbb{C}\left\langle dX,dT\right\rangle $ of complex linear combinations of the basic
differentials $dX$ and $dT,$ which are multiplied according to the table
\begin{equation}
\begin{tabular}{c|c}
& $%
\begin{array}{cc}
dX & dT%
\end{array}%
$ \\ \hline
$%
\begin{array}{c}
dX \\
dT%
\end{array}%
$ & $%
\begin{array}{cc}
dT & 0 \\
0 & 0%
\end{array}%
$%
\end{tabular}%
,  \label{table}
\end{equation}%
together with the corresponding sticky shuffle Hopf algebra $\mathcal{T}%
(\mathcal{L}) .$ For each pair of real numbers $a<b,$ we introduce a map $J_{a}^{b}$
from $\mathcal{T}(\mathcal{L}) $ to complex-valued random variables on the
probability space of the Brownian
motion $X$ by linear extension of the rule that, for arbitrary $%
dL_{1},dL_{2},\cdots dL_{m}\in \left\{ dX,dT\right\} $
\begin{eqnarray}
&&\!\!\!\!\!\!\!\!\!\!\!\!\!\!J_{a}^{b}\left\{ dL_{1}\otimes dL_{2}\otimes \dotsm \otimes dL_{m}\right\}\notag
\\
&=&\!\!\!\!\int_{a\leq x_{1}<x_{2}<\dotsb
<x_{m}<b}dL_{1}(x_{1})\,dL_{2}(x_{2})\,dL_{3}(x_{3})\dotsm\,dL_{m}(x_{m})\\
&=&\!\!\!\!\int_{a}^{b}\!\!\dotsm\int_{a}^{x_{4}}\!\!\int_{a}^{x_{3}}\!\!\int_{a}^{x_{2}}
\!\!dL_{1}(x_{1})\,dL_{2}(x_{2})\,dL_{3}(x_{3})\dotsm\,\notag
dL_{m}(x_{m}) .
\end{eqnarray}%
By convention $J_{a}^{b}$ maps the unit element of the algebra $\mathcal{T}%
(\mathcal{L}) $ to the unit random variable identically equal to 1.

Then (\ref{k}) can be restated as follows,
 \begin{equation}
J_{a}^{b}\left(\left\{
dX\right\} \right)J_{a}^{b}\left(\left\{
dX\right\} \right)=J_{a}^{b}\left(\left\{
dX\right\} \left\{
dX\right\}\right),
\end{equation}
using the fact that $\left\{ dX\right\} ^{2}=2\left\{ dX\otimes dX\right\} +\left\{ dT\right\} .$

The following more general Theorem is probably known to many probabilists.

\begin{theorem}\label{general}
For arbitrary $\alpha $ and $\beta $ in $\mathcal{T}(\mathcal{L})$,
\begin{equation*} \label{j}
J_{a}^{b}(\alpha )J_{a}^{b}(\beta )=J_{a}^{b}(\alpha \beta )\,.
\end{equation*}
\end{theorem}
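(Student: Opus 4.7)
The plan is to prove this by induction on $m+n$, where we restrict (by bilinearity of both sides) to the case of basis elements $\alpha=\{dL_1\otimes\dotsb\otimes dL_m\}$ and $\beta=\{dL_{m+1}\otimes\dotsb\otimes dL_{m+n}\}$ with each $dL_i\in\{dX,dT\}$. The base cases $m=0$ or $n=0$ are immediate, since $\{1_{\mathbb{C}}\}$ maps under $J_a^b$ to the constant random variable $1$ and is the unit of the sticky shuffle product. So the real content is the inductive step.

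For the inductive step I would write $F(t):=J_a^t(\alpha)$ and $G(t):=J_a^t(\beta)$; by construction both are Itô semimartingales with differentials
\begin{equation*}
dF(t) = J_a^t\{dL_1\otimes\dotsb\otimes dL_{m-1}\}\, dL_m(t),\qquad dG(t) = J_a^t\{dL_{m+1}\otimes\dotsb\otimes dL_{m+n-1}\}\, dL_{m+n}(t).
\end{equation*}
Then I would apply the Itô product rule
\begin{equation*}
F(b)G(b) \,=\, \int_a^b F(t^-)\,dG(t) \,+\, \int_a^b G(t^-)\,dF(t) \,+\, [F,G]_a^b,
\end{equation*}
and observe that the quadratic covariation satisfies $d[F,G](t) = J_a^t\{dL_1\otimes\dotsb\otimes dL_{m-1}\}\,J_a^t\{dL_{m+1}\otimes\dotsb\otimes dL_{m+n-1}\}\,dL_m(t)\,dL_{m+n}(t)$, where the product $dL_m(t)\,dL_{m+n}(t)$ is precisely the multiplication in the Itô algebra $\mathcal{L}$ given by the table (\ref{table}). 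Rewriting the three terms as iterated integrals of length $m+n$ yields
\begin{align*}
J_a^b(\alpha)J_a^b(\beta) &= J_a^b\!\left(\bigl(\{dL_1\otimes\dotsb\otimes dL_{m}\}\{dL_{m+1}\otimes\dotsb\otimes dL_{m+n-1}\}\bigr)\otimes dL_{m+n}\right)\\
&\quad + J_a^b\!\left(\bigl(\{dL_1\otimes\dotsb\otimes dL_{m-1}\}\{dL_{m+1}\otimes\dotsb\otimes dL_{m+n}\}\bigr)\otimes dL_{m}\right)\\
&\quad + J_a^b\!\left(\bigl(\{dL_1\otimes\dotsb\otimes dL_{m-1}\}\{dL_{m+1}\otimes\dotsb\otimes dL_{m+n-1}\}\bigr)\otimes dL_m dL_{m+n}\right),
\end{align*}
where in each of the three inner products the total rank has dropped, so the inductive hypothesis identifies each inner product with a sticky shuffle product. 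Comparing with the three-term inductive definition (\ref{sticky}) of the sticky shuffle product then finishes the step.

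The main obstacle is the bookkeeping in the induction: one must be careful that the three Itô terms line up term-by-term with the three summands in (\ref{sticky}), and in particular that the quadratic covariation contribution really does reproduce the $L_m L_{m+n}$ multiplication dictated by the Itô table. Once this dictionary between the three exclusive cases ``$N\in A\cap B^c$'', ``$N\in A^c\cap B$'', ``$N\in A\cap B$'' (as noted in the remark following (\ref{shuffle'})) and the three pieces of Itô's product formula is made explicit, the induction runs cleanly, and one only needs to note that the final constant-valued contributions (when either $\tilde\alpha$ or $\tilde\beta$ is empty) are covered by the base cases already handled.
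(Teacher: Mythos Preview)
Your proof is correct and follows essentially the same approach as the paper: reduce by bilinearity to basis tensors, then match the three terms of It\^{o}'s product formula with the three terms of the recursive definition (\ref{sticky}) of the sticky shuffle product, using induction on the total rank. The paper's proof is more terse, but the underlying argument is identical.
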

\begin{proof}
By bilinearity it is sufficient to consider the case when
\begin{equation}
\alpha =\left\{ dL_{1}\otimes dL_{2}\otimes \cdots \otimes dL_{m}\right\}
,\ \beta =\left\{ dL_{m+1}\otimes dL_{m+2}\otimes \cdots \otimes
dL_{m+n}\right\}
\end{equation}%
for $dL_{1},dL_{2},\cdots ,dL_{m+n}\in \left\{ dX,dT\right\} .$ In this case Theorem
\ref{general} follows, using the inductive definition (\ref{sticky}) for the
sticky shuffle product, from the product form of It\^{o}'s formula,%
\begin{equation}
d\left( \xi \eta \right) =\left( d\xi \right) \eta +\xi d\eta +\left( d\xi
\right) d\eta   \label{ito}
\end{equation}%
where stochastic differentials of the form $d\xi =FdX+GdT,$ with stochastically
integrable processes $F$ and $G$, are multiplied using the table (\ref%
{table}).\bigskip
\end{proof}

For planar Brownian motion $B=\left( X,Y\right) $ the Ito table (\ref{table}%
)~becomes%
\begin{equation}
\begin{tabular}{c|c}
& $%
\begin{array}{cc}
\begin{array}{cc}
dX & dY%
\end{array}
& dT%
\end{array}%
$ \\ \hline
$%
\begin{array}{c}
\begin{array}{c}
dX \\
dY%
\end{array}
\\
dT%
\end{array}%
$ & $%
\begin{array}{cc}
\begin{array}{cc}
dT & 0  \\
0 & dT%
\end{array}
&
\begin{array}{c}
0 \\
0%
\end{array}
\\
\begin{array}{cc}
0\ \ & 0%
\end{array}
& 0%
\end{array}%
$%
\end{tabular}%
.  \label{table'}
\end{equation}%

\begin{corollary}
Theorem \ref{general} holds when $\mathcal{L}$ is the algebra defined by the
multiplication table (\ref{table'}).
\end{corollary}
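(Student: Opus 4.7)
The plan is to observe that the proof of Theorem \ref{general} never uses any property peculiar to the one-dimensional It\^{o} table (\ref{table}); it uses only (i) bilinearity of $J_{a}^{b}$ and the sticky shuffle product, (ii) the inductive definition (\ref{sticky}) of that product, and (iii) the product form of It\^{o}'s formula (\ref{ito}) together with the fact that products of basic stochastic differentials are computed by whatever multiplication table defines $\mathcal{L}$. Since all three ingredients remain available when $\mathcal{L}=\mathbb{C}\langle dX,dY,dT\rangle$ is equipped with the table (\ref{table'}), the same proof should go through with essentially no change.

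Concretely, I would first extend the definition of $J_{a}^{b}$ in the obvious way so that $dL_1,\dots,dL_m$ now range over $\{dX,dY,dT\}$ rather than $\{dX,dT\}$; the iterated integral on the simplex $a\le x_1<\dots<x_m<b$ makes sense verbatim. Next, by bilinearity it suffices to verify the identity $J_a^b(\alpha)J_a^b(\beta)=J_a^b(\alpha\beta)$ for pure-tensor elements $\alpha=\{dL_1\otimes\dots\otimes dL_m\}$ and $\beta=\{dL_{m+1}\otimes\dots\otimes dL_{m+n}\}$. I would proceed by induction on $m+n$, just as in the proof of Theorem \ref{general}. The outer integrator is treated by applying It\^{o}'s product rule (\ref{ito}) to the two innermost iterated integrals $\xi$ and $\eta$, whose outer stochastic differentials have the form $FdX+GdY+HdT$ now multiplied according to (\ref{table'}). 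The three terms produced by (\ref{ito})---namely $(d\xi)\eta$, $\xi d\eta$, and the correction $(d\xi)(d\eta)$---then correspond exactly to the three summands on the right-hand side of the sticky shuffle recursion (\ref{sticky}), the last of which packages the algebra product $L_m L_{m+n}$ computed via (\ref{table'}).

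The step to double-check is the quadratic variation correction $(d\xi)(d\eta)$, because this is where the new table enters nontrivially: whereas in the one-dimensional case only $dX\cdot dX=dT$ contributes, in the planar case we additionally need $dX\cdot dY=dY\cdot dX=0$ and $dY\cdot dY=dT$, all of which are precisely encoded in (\ref{table'}). Since the inductive form of the sticky shuffle product is defined using whatever multiplication $L_mL_{m+n}$ the algebra $\mathcal{L}$ provides, the correspondence between the two sides of the identity is automatic once the It\^{o} table used in $\mathcal{L}$ coincides with the It\^{o} table governing the stochastic calculus, as it does here.

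I do not anticipate a real obstacle: the whole design of the sticky shuffle Hopf algebra is to be \emph{table-agnostic}, built so as to mirror It\^{o}'s product formula for any associative $\mathcal{L}$. The corollary is therefore almost immediate, amounting to little more than the remark that the proof of Theorem \ref{general} is written in a way that applies to any It\^{o} algebra, and in particular to the one given by (\ref{table'}).
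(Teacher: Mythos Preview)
Your proposal is correct and matches the paper's own treatment: the paper states the corollary without a separate proof, treating it as immediate from Theorem~\ref{general}, and you have correctly identified why---the proof of Theorem~\ref{general} uses only bilinearity, the inductive rule~(\ref{sticky}), and It\^{o}'s product formula~(\ref{ito}) with whatever multiplication table defines~$\mathcal{L}$, so it goes through verbatim for the planar table~(\ref{table'}).
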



Basic for our calculations is the next theorem. It follows from the fact that expectations
of stochastic integrals against either $dX$ or $dY$ as integrators are zero.

\begin{theorem}\label{sz.LT}
For arbitrary $n\in \mathbb{N,}$ $a<b\in \mathbb{R}$ and basis elements $dL_{1}$,
$dL_{2}$, \dots, $dL_{n}\in\{dX,dY,dT\}$,
\begin{equation*}
\mathbb{E}\!\left[ J_{a}^{b}\left\{ dL_{1}\otimes dL_{2}\otimes ...\otimes
dL_{n}\right\} \right] \,=\,0
\end{equation*}%
unless%
\begin{equation*}
dL_{1}=dL_{2}=\cdots =dL_{n}=dT.
\end{equation*}
\end{theorem}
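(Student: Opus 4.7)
The plan is to induct on the tensor rank $n$, peeling off the outermost integrator $dL_n$ (the one corresponding to the largest time variable $x_n$). The base case $n=1$ is immediate: if $dL_1 \in \{dX, dY\}$, then $J_a^b\{dL_1\} = L_1(b) - L_1(a)$ has expectation zero by the martingale property of Brownian motion, while the case $dL_1 = dT$ is excluded by hypothesis.

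For the inductive step I would introduce the process $F(s) := J_a^s\{dL_1 \otimes dL_2 \otimes \cdots \otimes dL_{n-1}\}$, so that
\begin{equation*}
J_a^b\{dL_1 \otimes dL_2 \otimes \cdots \otimes dL_n\} \,=\, \int_a^b F(s)\, dL_n(s).
\end{equation*}
If $dL_n$ is $dX$ or $dY$, the right-hand side is an It\^o integral of a predictable, square-integrable process against a Brownian motion, hence a mean-zero martingale, and its expectation vanishes regardless of the values of the earlier $dL_j$. If instead $dL_n = dT$, then by the hypothesis of the theorem some $dL_j$ with $j<n$ must differ from $dT$, so the induction hypothesis gives $\mathbb{E}[F(s)] = 0$ for each $s \in [a,b]$; Fubini's theorem then yields $\mathbb{E}\!\left[\int_a^b F(s)\, ds\right] = \int_a^b \mathbb{E}[F(s)]\, ds = 0$.

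The only real obstacle is the standard integrability bookkeeping needed to promote the It\^o integral to a genuine (rather than merely local) martingale and to interchange $\mathbb{E}$ with the deterministic $ds$ integral in the second case. Since each $F(s)$ is a polynomial expression in increments of planar Brownian motion over the bounded interval $[a,s]$, all $L^p$ norms of $F(s)$ are uniformly bounded in $s$, so both points go through routinely and the induction closes.
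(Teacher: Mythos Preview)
Your proof is correct and is essentially a fleshed-out version of what the paper does: the paper offers no formal proof at all, stating only that the theorem ``follows from the fact that expectations of stochastic integrals against either $dX$ or $dY$ as integrators are zero.'' Your induction on $n$, splitting on whether the outermost integrator $dL_n$ is a Brownian differential or $dT$ and invoking Fubini in the latter case, is exactly the standard way to make that one-line remark rigorous, and the integrability bookkeeping you flag is indeed routine here.
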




In view of (\ref{classical})
\begin{equation}\label{xxyxxy}
\mathcal{A}_{[a,b)}\,=\,\dfrac{1}{2}J_{a}^{b}(dX\otimes dY-dY\otimes dX)\,.
\end{equation}

Now consider the moments sequence of classical L\'{e}vy area in terms of the basis
$\left( dX,dY,dT\right) $, i.e., Eqn. (\ref{xxyxxy}). In view of Theorem \ref{general}
\begin{eqnarray}
\left[ \mathcal{A}_{[a,b)}\right] ^{n}
&=&{\dfrac{1}{2^n}}\left( J_{a}^{b}( dX\otimes dY-dY\otimes dX)  \right) ^{n} \notag \\
&=&{\dfrac{1}{2^n}}J_{a}^{b}\left( \{ dX\otimes dY-dY\otimes dX\} ^{n}\right)
\end{eqnarray}

The $n$th sticky shuffle power $\left\{ dX\otimes dY-dY\otimes dX\right\} ^{n}$ will
consist of non-sticky shuffle products of rank $2n$ together with terms of lower ranks
$n,n+1,...,2n-1$,$~$all of which except the rank $n$ term will contain one or more
copies of $dX$ and $dY$, and will thus not contribute to the expectation in view of
Theorem\,\ref{sz.LT}. The term of rank $n$ will be a multiple
of $dT\otimes dT\cdots \otimes \overset{(n)}{dT}.$ Thus we can write%
\begin{equation}
\left\{ dX\otimes dY-dY\otimes dX\right\} ^{n}
\,=\,w_{n}\bigl\{ dT\otimes dT\cdots \otimes \overset{%
(n)}{dT}\bigr\} +\,\text{terms of rank}>n.  \label{rank}
\end{equation}%
for some coefficient $w_{n}$. The corresponding moment is given by
\begin{eqnarray}
\mathbb{E}\!\left[ \mathcal{A}_{[a,b)}\right]
^{n} &=&\dfrac{w_{n}}{2^{n}}\,\mathbb{E}\!\left[ J_{a}^{b}\bigl(
\{ dT\otimes dT\cdots \otimes \overset{(n)}{dT}\} \bigr) \right]
\notag \\
&=&\dfrac{w_{n}}{2^{n}}\int_{a\leq x_{1}<x_{2}<\cdots
<x_{n}<b}dx_{1}\,dx_{2}\dotsm\,dx_{n}  \notag \\
&=&\dfrac{w_{n}\left( b-a\right) ^{n}}{2^{n}n!}.
\label{moment}
\end{eqnarray}

By the recovery formula (\ref{recovery}) and the multiplicativity of the $n$%
th order coproduct $\Delta ^{\left( n\right) },$%
\begin{eqnarray}
&&\!\!\!\!\!\!\!\!\!\!\!\!w_{n}dT\otimes dT\cdots \otimes \overset{(n)}{dT} \notag\\
&=&\left\{\left\{dX\otimes dY-dY\otimes dX\right\} ^{n}\right\}_n \notag\\
&=&\left( \Delta ^{(n)}\bigl( \left\{ dX\otimes dY-dY\otimes dX\right\}
_{{}}^{n}\bigr) \right) _{\!\!(1,1,...,\overset{\left( n\right) }{1})} \notag\\
&=&\left( \Bigl( \Delta ^{(n)}\bigl( \left\{ dX\otimes dY-dY\otimes dX\right\} \bigr)
\Bigr) ^{n}\right) _{\!\!(1,1,...,\overset{\left( n\right) }{1})}.
\end{eqnarray}%
Now%
\begin{eqnarray}
&&\!\!\!\!\!\!\!\!\!\!\!\!\Delta ^{(n)}\bigl( \left\{ dX\otimes dY-dY\otimes dX\right\} \bigr)  \notag\\
&=&\sum_{1\leq j\leq n}1_{\mathcal{T}(\mathcal{L}) }\otimes\cdots \otimes\overset{(j)}{\left\{dX\otimes dY-dY\otimes dX\right\}}
\otimes\cdots \otimes 1_{\mathcal{T}(\mathcal{L}) }\notag \\
&&+\sum_{1\leq j\,<k\leq n}\biggl( 1_{\mathcal{T}( \mathcal{L}) }\otimes \cdots \otimes \overset{\left( j\right) }{\left\{ dX\right\} }\otimes \cdots \otimes \overset{\left(
k\right) }{\left\{ dY\right\} }%
\otimes \cdots \otimes \overset{\left( n\right) }{1_{\mathcal{T}(\mathcal{L}) }} \notag \\
&& -\ 1_{\mathcal{T}(\mathcal{L}) }\otimes \cdots \otimes
\overset{\left( j\right) }{\left\{ dY\right\} }\otimes \cdots \otimes \overset{\left( k\right) }{\left\{ d{X}%
\right\} }\otimes \cdots \otimes \overset{\left(
n\right) }{1_{\mathcal{T}(\mathcal{L}) }}\biggr)
\end{eqnarray}
The first term of this sum, being of rank $2,$ cannot contribute to the component of joint rank $(1,1,...,%
\overset{\left( n\right) }{1})$ of the $n$th power of $\Delta ^{(n)}\left( \left\{ dX\otimes
dY-dY\otimes dX\right\} \right)$, where product in the n\textit{th} tensor power
$\bigotimes\!^{(N)}\mathcal{T}(\mathcal{L})$ is defined exactly as in the case $n=2$ in
\eqref{eq.power}. Thus
\begin{eqnarray}\label{eq.111}
&&\!\!\!\!\!\!\!\!\!\!\!\!w_{n}dT\otimes dT\cdots \otimes \overset{(n)}{dT} \notag\\
&=&\left( \left( \Delta ^{(n)}\bigl( \left\{dX\otimes dY-dY\otimes dX\right\} \bigr)
\right) ^{\!n\,}\right) _{\!\!(1,1,...,\overset{\left( n\right) }{1})} \\
&=&\left( \left( \sum_{1\leq j\,<k\leq n}\biggl( 1_{\mathcal{T}(
\mathcal{L}) }\otimes \cdots \otimes \overset{(j)}{%
\left\{ dX\right\} }\otimes \cdots \otimes
\overset{(k)}{\left\{ dY\right\} }\otimes \cdots \otimes \overset{(n)}{1_{\mathcal{T}%
(\mathcal{L})}}\biggr. \right. \right. \notag \\
&&\left. \left. \biggl.-\ 1_{\mathcal{T}(\mathcal{L}) }\otimes
\cdots \otimes \overset{(j)}{\left\{ dY\right\} }\otimes \cdots \otimes \overset{(k)}{%
\left\{ dX\right\} }\otimes \cdots \otimes
\overset{(n)}{1_{\mathcal{T}(\mathcal{L}) }}\biggr)
\right)^{\!\!\!n\,}\right) _{\!\!\!(1,1,...,\overset{\left( n\right) }{1})}\notag
\end{eqnarray}%
This calculation of $w_{n}$ can be finished using some combinatorics. We do that in
the following section.

\section{The moments of L\'{e}vy's area}

To evaluate the moments $\mathbb{E}\!\left[ \mathcal{A}_{[a,b)}\right] ^{n}$\!, we need
to calculate the number $w_{n}$, as explained in (\ref{moment}). By \eqref{eq.111}, we
have
\begin{equation}\label{eq.start}
w_{n}\,dT\otimes dT\cdots \otimes \overset{(n)}{dT}
\,\,=\,\left(\left(\sum_{h\neq k}\sn(h,k)R_{h,k}\right)^{\!\!\!n\,}\right)_{\!\!\!(1,1,...,\overset{\left( n\right) }{1})}
\end{equation}
with
\begin{equation}
R_{h,k}\,:=\,1\otimes \dots \otimes 1\otimes \overset{(h)}{\{dX\}}\otimes 1\otimes \dots \otimes 1\otimes \overset{(k)}{\{dY\}}\otimes 1\otimes \dots \otimes 1  \label{40}
\end{equation}%
and 
\begin{equation}
\sn(h,k)\,:=\,%
\begin{cases}
+1 & \text{if $h< k$}, \\
-1 & \text{if $h>k$}.%
\end{cases}
\label{41}
\end{equation}%
The $n$th power in \eqref{eq.start} is based on the sticky shuffle product in
$\mathcal{T}(\mathcal{L})$ and its extension to the n\textit{th} tensor power
$\bigotimes\!^{(n)}\mathcal{T}(\mathcal{L})$, as described in \eqref{eq.power} for
$n=2$.

If we set $e:=(h,k)$, then we may also write $R_{e}$ for $R_{h,k}$ and $\sn(e)$
for $\sn(h,k)$. Using distributivity, 
this yields
\begin{equation}
w_{n}\,dT\otimes dT\cdots \otimes \overset{(n)}{dT}
\,=\,\sum\ \left(\prod_{\ell=1}^n\sn(e_\ell)\right)\!\!\left(\prod_{\ell=1}^nR_{e_\ell}\right)_{\!\!\!(1,1,...,\overset{\left( n\right) }{1})}
\,,\label{eq.sum}
\end{equation}
where the sum runs over all $n$-tuples $(e_1,e_2,\dotsc, e_n)$ of pairs $(h,k)$ with
$h\neq k$. We may imagine each pair $e_\ell=(h_\ell,k_\ell)$ as a directed edge, an
\emph{arc}, from $h_{\ell }$ to $k_{\ell }$. Each $n$-tuples $(e_1,e_2,\dotsc, e_n)$ is
then a directed labeled multigraph, we say a \emph{digraph}, on the vertex set
$V:=\{1,2,\dotsc,n\}$. We have to see what the individual arcs $e_{\ell}$ of a digraph
$(e_1,e_2,\dotsc, e_n)$ contribute to its corresponding summand
$\pm\prod_{\ell=1}^nR_{e_\ell}$ inside the sum \eqref{eq.sum}.
For example, in the case $n=4$, the two arcs $e_{1}=(1,2)$ and $e_{2}=(3,2)$
contribute
\begin{equation}\label{42}
\begin{split}
\sn(e_1)&\sn(e_2)\bigl(R_{e_1}R_{e_2}\bigr)_{\!(1,1,1,1)}\\
&\,=\,-\bigl((\{dX\}\otimes \{dY\}\otimes 1\otimes 1)(1\otimes\{dY\}\otimes \{dX\}\otimes 1)\bigr)_{\!(1,1,1,1)} \\
&\,=\,-\bigl(\{dX\}1\bigr)_{\!(1)}\otimes\bigl(\{dY\}\{dY\}\bigr)_{\!(1)}\otimes\bigl(1\{dX\}\bigr)_{\!(1)}\otimes\bigl(1\cdot1\bigr)_{\!(1)}\\
&\,=\,-\,dX\otimes dT\otimes dX\otimes 1\,,
\end{split}
\end{equation}
where we basically ignored $e_3$ and $e_4$ (and the corresponding $R_{e_3}$,
$R_{e_4}$, $\sn(e_3)$ and $\sn(e_4)$) to illustrate how the product operates.

In order to calculate the coefficient $w_{n}$ of $dT\otimes dT\otimes \dotsm \otimes
dT$ in \eqref{eq.sum}, we need to retain only those summands
$\pm\prod_{\ell=1}^nR_{e_\ell}$ that contribute a scalar multiple of $dT\otimes
dT\otimes \dotsm \otimes dT$. We may discard other summands. Hence, we do not
have to sum over all digraphs $(e_1,e_2,\dotsc, e_n)$. To see which ones we have to
retain, let us assume that $(e_1,e_2,\dotsc, e_n)$ yields a multiple of $dT\otimes
dT\otimes \dotsm \otimes dT$ in \eqref{eq.sum}. Since the $n$ copies of $dX$ and $n$
copies of $dY$ in the unexpanded product $\prod_{\ell=1}^nR_{e_\ell}$ must yield $n$
copies of $dT$\!, one in each possible position, each vertex of the digraph
$(e_1,e_2,\dotsc, e_n)$ must have either exactly two incoming and no outgoing arcs
(corresponding to a $(dY)^2$\,) or exactly two outgoing and no incoming arcs
(corresponding to a $(dX)^2$\,).
This shows that, inevitable, $(e_1,e_2,\dotsc, e_n)$ must consist of disjoint
alternatingly oriented cycles that cover $V$\!, cycles whose arcs go ``forward -
backward - forward - backward - \dots''. Every such digraph $(e_1,e_2,\dotsc, e_n)$
has necessarily an even number of vertices, $n=2m$, and contributes either $+1$ or
$-1$ to $w_{2m}$. In particular, this means that for odd $n$ we do not obtain any term
$dT\otimes dT\otimes \dotsm \otimes dT$ in \eqref{eq.sum}, i.e.\ $w_n=0$ for odd $n$.

Before we do the counting in the case $n=2m$, we transform the alternatingly oriented
labeled digraphs $(e_1,e_2,\dotsc, e_{2m})$ into cyclically oriented digraphs $D$, and,
eventually, into permutations of a certain kind. Turning around each second arc in each
cycle, we get cyclicly oriented cycles (like cyclic one way roads). These disjoint cycles
still cover $V$ and have even length, as they arose from alternatingly oriented cycles.
Conversely, we can always go back to alternatingly oriented cycles by flipping each
second arc. To be precise, each (labeled) even cycle has two alternating orientations
but also two cyclic orientations. Hence, there are several ways to match our labeled
alternatingly oriented digraphs $(e_1,e_2,\dotsc, e_{2m})$ and the new cyclically
oriented digraphs. However, they are all valid. We only need to know that there exist a
bijection, a bijection under which every image differs from its pre-image
$(e_1,e_2,\dotsc, e_{2m})$ in exactly $m$ edges. This will then result in an additional
factor of $(-1)^m$ in our calculations. Moreover, at this point, our cyclically oriented
digraphs do not contain multiple arcs, so that we may forget the labels $1,2,\dotsc,2m$
of the arcs $e_1,e_2,\dotsc, e_{2m}$. If a digraph $D$ with $2m$ many unlabeled arcs
has no multiple (no indistinguishable) arcs, then it corresponds to exactly $(2m)!$ many
labeled digraphs, yielding a factor of $(2m)!$ in our sum. Very careful readers might be
astonished that we could not drop the edge labels earlier in this way. We invite them to
investigate the case $m=1$ to see why.

Eventually, we can now turn towards permutations in the symmetric group $\cS_{2m}$
as representatives for the diagraphs $D$ and the corresponding summands
$\pm\prod_{\ell=1}^nR_{e_\ell}$ inside the sum of \eqref{eq.sum}. We may view each
arc $(h,k)$ in any cyclically oriented unlabeled digraph $D$ as the assignment of a
function value, $h\mapsto k=:\cs(h)$, and obtain a permutation $\cs=\cs_{D}$ on
$V=\{1,2,\dotsc,2m\}$. In our case, the cycles of $\cs$ have even length. We denote
with $\cD_{2m}\sb\cS_{2m}$ the set of all permutations of this kind. Putting all this
together, and keeping track of the signs, we see that
\begin{equation}
w_{2m}\,=\,(-1)^m(2m)!\sum_{\cs\in\cD_{2m}}\sn(\cs)\,,
\end{equation}
where
\begin{equation}
\sn(\cs)\,:=\,\prod_{j=1}^{2m}\sn(j,\cs(j))\,.
\end{equation}

To determine this sum, we cancel off some summands $\cs$ with opposite signs
$\sn(\cs)$. We call a point $h\in V$ a \emph{transit}
of $\cs$ if
\begin{equation}
\text{either}\quad\cs^{-1}(h)<h<\cs(h)\quad\text{or}\quad\cs^{-1}(h)>h>\cs(h)\ .
\end{equation}
Let $\cD_{2m}'$ bet the set of all $\cs\in\cD_{2m}$ which have at least one transit. We
show that the elements of $\cD_{2m}'$ cancel out completely and can be ignored in our
sum. Obviously, every $\cs\in\cD_{2m}'$ contains a unique smallest transit $h$, and we
obtain a permutation $\cs'$ of $V\!\sm\!\{h\}$ by replacing the chain of assignments
$\cs^{-1}(h)\mto h\mto\cs(h)$ with the shorter chain $\cs^{-1}(h)\lmto\cs(h)$. The new
permutation $\cs'$ has a unique odd cycle
\begin{equation}
j_1\mto j_2\mto\dotsb\mto\cs^{-1}(h)\lmto\cs(h)\mto\dotsm\mto j_{2k-1}\mto j_1.
\end{equation}
If we walk once around this cycle and observe the indices $j_1,j_2,j_3,\dotsc$ as a kind
of altitude, then we will cross the altitude $h$ as many times upwards, from below $h$
to above $h$, as downwards, from above $h$ to below $h$. Hence, there is an even
number of ways to reinsert $h$ as transit into that odd cycle, see Fig.\,\ref{fig.1}.
\begin{figure}[t]
\begin{center}\label{fig.1}
\vspace{-1em}
\includegraphics[
scale = .13
]{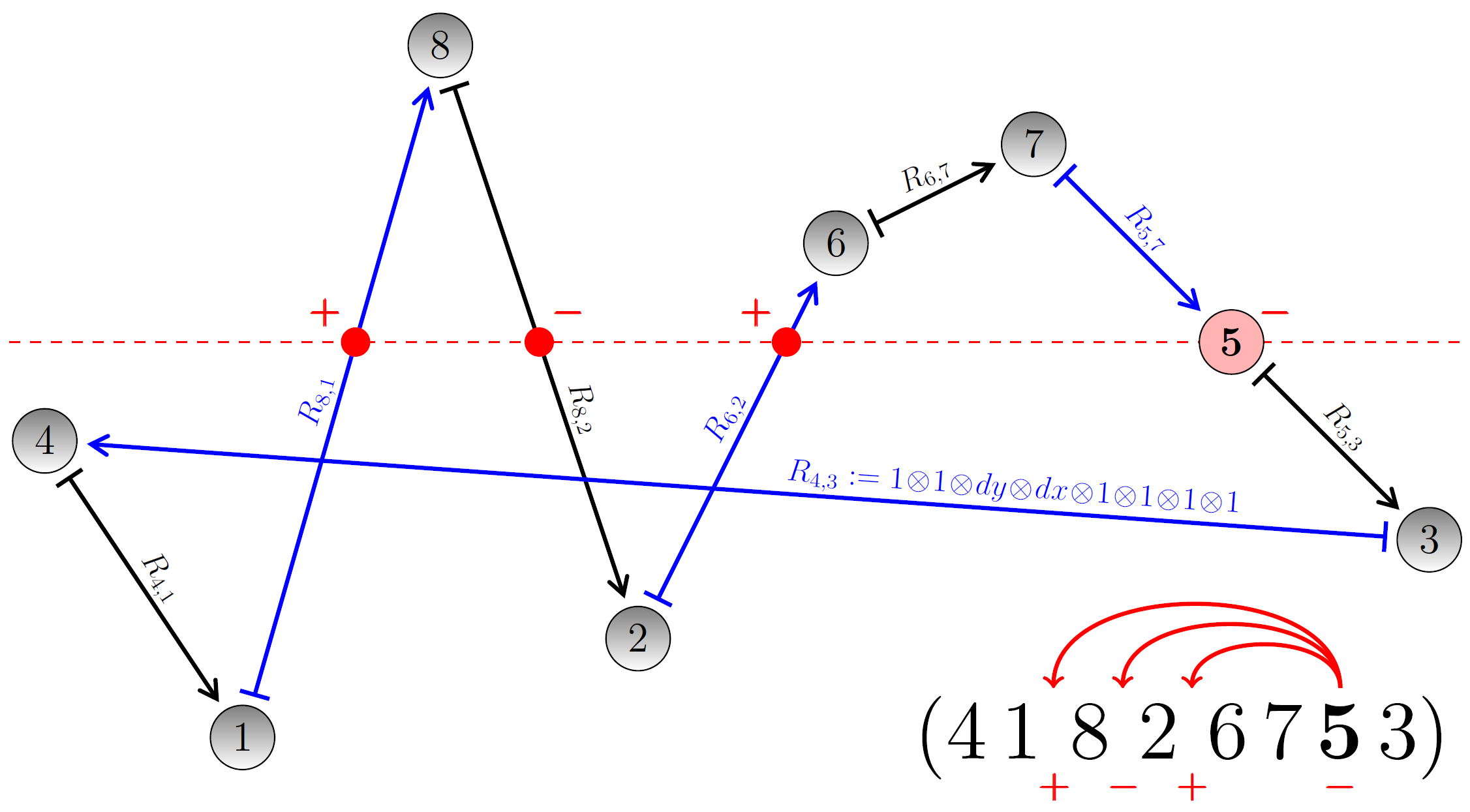}
\vspace{-1em}
\caption{The cyclic permutation $\cs=(4,1,8,2,6,7,5,3)$ with smallest transit $h=5$.}
\end{center}
\end{figure}
One half of the permutations that we obtain will have positive sign, one half negative
sign. Removal and reinsertion of a smallest transit yields an equivalence relation
$\sim$ on $\cD_{2m}'$, $\cs\sim\crr$ if and only if $\cs'=\crr'$. The corresponding
equivalence classes form a partition of $\cD_{2m}'$, and each of them cancels off
nicely. So, we only have to sum over $\cD_{2m}\!\sm\!\cD_{2m}'$, that is, over all
permutations $\cs\in \cS_{2m}$ with
\begin{equation}
\text{either}\quad\cs^{-1}(j)<j>\cs(j)\quad\text{or}\quad\cs^{-1}(j)>j<\cs(j)
\end{equation}
for all $j\in V:=\{1,2,\dotsc,2m\}$. We call this kind of permutations \emph{forth-back
permutations}. Their number is the so-called \emph{Euler zigzag number} $A_{2m}$,
i.e.\ $|\cD_{2m}\!\sm\!\cD_{2m}'|=A_{2m}$, as shown in 
Lemma\,\ref{lem.fb} in the appendix. Since all forth-back permutations have sign
$(-1)^m$, we get
\begin{equation}
w_{2m}\,=\,(2m)!\,A_{2m}.
\end{equation}

From this and Equation\,(\ref{moment}) we finally arrive at L\'{e}vy's classical result
\eqref{Levy Mom}:

\begin{theorem}
The nonzero moments of the L\'{e}vy area $\mathcal{A}_{[a,b)}$ are%
\begin{equation*}
\mathbb{E}\!\left[ \mathcal{A}_{[a,b)}\right]^{2m}\,=\,\,\left(\frac{ b-a}{2}\right)^{\!2m}\!A_{2m}\,.
\end{equation*}
\end{theorem}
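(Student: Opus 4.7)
The plan is to read off the theorem as the final assembly of the ingredients already prepared in Sections~3 and~4. Starting from the iterated-integral representation \eqref{xxyxxy}, the homomorphism property of $J_a^b$ in Theorem~\ref{general} lets me replace $[\mathcal{A}_{[a,b)}]^n$ by $2^{-n}J_a^b\bigl(\{dX\otimes dY-dY\otimes dX\}^n\bigr)$, where the $n$th power is taken in the sticky shuffle algebra. Theorem~\ref{sz.LT} then forces the only surviving summand under expectation to be the one of rank $n$ whose integrators are all $dT$, so everything collapses to identifying the scalar $w_n$ in \eqref{rank}. Equation \eqref{moment} is then an immediate time-ordered-simplex integration, giving
\begin{equation*}
\mathbb{E}[\mathcal{A}_{[a,b)}]^n\,=\,\frac{w_n(b-a)^n}{2^n n!}\,,
\end{equation*}
so the theorem reduces to proving $w_{2m}=(2m)!\,A_{2m}$ and $w_{2m+1}=0$.

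To evaluate $w_n$, I would apply the recovery formula \eqref{recovery} and multiplicativity of $\Delta^{(n)}$ to write $w_n\,dT^{\otimes n}$ as the $(1,1,\ldots,1)$-component of $\bigl(\Delta^{(n)}\{dX\otimes dY-dY\otimes dX\}\bigr)^{n}$. After expansion by distributivity as in \eqref{eq.sum}, each summand is indexed by an $n$-tuple of signed arcs $e_\ell=(h_\ell,k_\ell)$. The key observation, which I would prove by directly inspecting the sticky shuffle product \eqref{sticky} on each tensor slot, is that a summand contributes a nonzero multiple of $dT^{\otimes n}$ exactly when every vertex of the corresponding digraph has in-degree $2$ and out-degree $0$, or vice versa; this is where the It\^o rule $dX\cdot dX=dY\cdot dY=dT$ enters. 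Such digraphs are unions of alternatingly-oriented even cycles covering $V$, forcing $n=2m$ and yielding $w_{2m+1}=0$.

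Next, I would bijectively reorient each alternating cycle into a cyclic one, absorbing the resulting $(-1)^m$ sign, and drop the arc-labels at the cost of a $(2m)!$ factor, thereby identifying the relevant digraphs with permutations $\cs\in\cD_{2m}\sb \cS_{2m}$ whose cycles all have even length. With the sign $\sn(\cs)=\prod_j\sn(j,\cs(j))$, this gives
\begin{equation*}
w_{2m}\,=\,(-1)^m(2m)!\sum_{\cs\in\cD_{2m}}\sn(\cs)\,.
\end{equation*}
The main obstacle, and the combinatorial heart of the argument, is evaluating this alternating sum. I would carry out a sign-reversing involution on the subset $\cD_{2m}'\sb\cD_{2m}$ of permutations admitting a transit: for $\cs\in\cD_{2m}'$, locate the smallest transit $h$, delete it to get $\cs'$ on $V\sm\{h\}$, and partition $\cD_{2m}'$ by the fibres of $\cs\mapsto\cs'$. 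Each fibre corresponds to the even collection of ways of re-inserting $h$ into the unique odd cycle of $\cs'$, which by an altitude-crossing argument splits into equally many positive and negative sign classes, so each fibre cancels.

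What remains is a sum over the transit-free permutations---the forth-back permutations---all of which share sign $(-1)^m$. Lemma~\ref{lem.fb} (assumed from the appendix) identifies their cardinality with the Euler zigzag number $A_{2m}$, hence $w_{2m}=(2m)!\,A_{2m}$. Substituting into \eqref{moment} produces the stated formula. The delicate point to execute carefully is the bookkeeping of signs and multiplicities in the three successive rewritings (alternating $\to$ cyclic, labelled $\to$ unlabelled, digraphs $\to$ permutations); everything else is routine once the transit involution is set up.
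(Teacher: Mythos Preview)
Your proposal is correct and follows essentially the same route as the paper: reduction to the coefficient $w_n$ via Theorem~\ref{general}, Theorem~\ref{sz.LT} and \eqref{moment}; the recovery-formula/distributivity expansion \eqref{eq.sum}; the digraph degree constraint forcing alternating even-cycle covers; the reorientation-and-unlabelling step with factors $(-1)^m(2m)!$; and the transit-based cancellation leaving forth-back permutations counted by Lemma~\ref{lem.fb}. The only cosmetic discrepancy is that you announce a ``sign-reversing involution'' but then (correctly) describe the paper's fibre-wise cancellation under $\cs\mapsto\cs'$, which is not literally an involution since the fibres can have more than two elements.
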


%

\section{Appendix about Euler Numbers}

In this section, we present a 
simple lemmas about Euler numbers. It is of sufficient general nature to be of potential
interest elsewhere. Many similar results and basics can be found in \cite{pe} and
\cite{st}.

A permutation $\mathfrak{s}$ in the symmetric group $\cS_{n}$ is a \emph{zigzag
permutation} (misleadingly also called alternating permutation)
if $\mathfrak{s}(1)>\mathfrak{s}(2)<\mathfrak{s}(3)>\mathfrak{s}(4)<\dotsb $%
. In other words, $\mathfrak{s}$ is zigzag if $\mathfrak{s}(1)>\mathfrak{s}(2)$ and
\begin{equation}
\text{either}\quad\mathfrak{s}(j-1)<\mathfrak{s}(j)>\mathfrak{s}(j+1)\quad\text{or}\quad\mathfrak{s}(j-1)>\mathfrak{s}(j)<\mathfrak{s}(j+1)  \label{1}
\end{equation}%
for all $j\in \{2,3\dotsc ,n-1\}$.
If we have the initial condition $\mathfrak{s}(1)<\mathfrak{s}(2)$, instead of $\mathfrak{s}(1)>%
\mathfrak{s}(2)$, we may call $\mathfrak{s}$ \emph{zagzig}. The number of all zigzag
permutations in $\cS_{n}$ is called the \emph{Euler zigzag number} $A_{n}$. These
numbers occur in many places, for example, as the coefficients of $\frac{z^{2n}}{(2n)!}$
in the Maclaurin series of $\sec
(z)+\tan (z)$. In this paper, we met them as the number of \emph{%
forth-back permutations}, as we called them. These are the permutations $%
\mathfrak{s}\in \cS_{n}$ with
\begin{equation}
\text{either}\quad\mathfrak{s}^{-1}(j)<j>\mathfrak{s}(j)\quad\text{or}\quad\mathfrak{s}^{-1}(j)>j<\mathfrak{s}(j)  \label{2}
\end{equation}%
for all $j\in \{1,2,\dotsc ,n\}$. Since no forth-back permutation can contain a cycle of odd
length, $n$ must be even for there to exist forth-back permutations, say $n=2m>0$. In
that case, we actually have the following lemma:

\begin{lemma}
\label{lem.fb} The number of forth-back permutations in $\cS_{2m}$ is the Euler zigzag
number $A_{2m}$.
\end{lemma}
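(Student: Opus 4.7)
The plan is to give a direct bijection $\Phi$ between the set $\cD_{2m}\sm\cD_{2m}'$ of forth-back permutations in $\cS_{2m}$ and the set of zigzag permutations in $\cS_{2m}$, whose cardinality is $A_{2m}$ by definition. The starting observation is that the forth-back condition forces peaks and valleys to strictly alternate along every cycle: if $j$ is a peak then $\cs(j)<j$, and applying the forth-back axiom at $j':=\cs(j)$ (whose predecessor $j$ satisfies $j>j'$) forces $\cs(j')>j'$, making $j'$ a valley. In particular every cycle has even length and the largest entry of each cycle is a peak.

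I would define $\Phi(\cs)$ by writing each cycle starting with its maximum, listing the cycles in increasing order of these maxima, and concatenating into a word $b_{1}b_{2}\cdots b_{2m}$. Inside one cycle-segment $(M,x_{1},x_{2},\dotsc,x_{2k-1})$, the peak/valley alternation along the cycle translates into the pattern $M>x_{1}<x_{2}>x_{3}<\dotsb>x_{2k-1}$, so the $2k-1$ internal transitions alternate starting and ending with $>$. At each junction between consecutive cycle-segments, the last entry of the earlier segment is a valley of its cycle, hence strictly below that cycle's maximum, which is in turn strictly below the next cycle's maximum by our ordering; thus the junction transition is an ascent. Splicing these patterns produces exactly the zigzag pattern $>,<,>,<,\dotsc,>$, so $\Phi(\cs)$ is zigzag.

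The inverse of $\Phi$ would be the left-to-right-maxima decomposition. Given a zigzag word $b_{1}\cdots b_{2m}$, let $1=p_{1}<p_{2}<\dotsb<p_{s}$ be the positions of its left-to-right maxima. For $j\geq 2$ the transition $b_{p_{j}-1}<b_{p_{j}}$ is an ascent, and in a zigzag word starting with a descent ascents land only on odd positions (one has $b_{2k}<b_{2k+1}$ but $b_{2k-1}>b_{2k}$), so every $p_{j}$ is odd and consequently every block $b_{p_{j}}b_{p_{j}+1}\dotsm b_{p_{j+1}-1}$ has even length. Reading each such block as a cycle produces a cycle whose maximum appears first and whose remaining entries alternate valley/peak/valley/$\dotsc$, so the assembled permutation is forth-back and has $\Phi$-image equal to the given zigzag word.

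The main piece of bookkeeping is the compatibility of the internal pattern of each cycle-segment with the ascent joining successive segments, tracked cleanly through the parity of the left-to-right-maxima positions. Once that parity analysis is carried out, both $\Phi$ and its inverse are well-defined and mutually inverse, yielding the identity $|\cD_{2m}\sm\cD_{2m}'|=A_{2m}$.
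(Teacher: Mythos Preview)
Your argument is correct and is essentially the same as the paper's: both apply Foata--Sch\"utzenberger's \emph{transformation fondamentale}, writing each cycle starting at its maximum, ordering cycles by increasing maxima, concatenating, and inverting via the left-to-right-maxima decomposition. Your write-up supplies more explicit bookkeeping (notably the parity argument forcing each block to have even length), but the bijection and its verification coincide with the paper's proof.
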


\begin{proof}
A bijection between the forth-back permutations $\mathfrak{s}$ and the zigzag
permutations in $\cS_{2m}$ is obtained by applying the so-called \emph{transformation
fundamentale} \cite{FoSc}. To perform this transformation, we write $\mathfrak{s}$ in
cycle notation
\begin{eqnarray}
\mathfrak{s} &=&(\mathfrak{s}_{1},\mathfrak{s}_{2},\dotsc ,\mathfrak{s}%
_{\ell _{2}-1})(\mathfrak{s}_{\ell _{2}},\mathfrak{s}_{\ell _{2}+1},\dotsc ,%
\mathfrak{s}_{\ell _{3}-1})(\mathfrak{s}_{\ell _{3}},\mathfrak{s}_{\ell
_{3}+1},\dotsc ,\mathfrak{s}_{\ell _{4}-1})\dotsm  \notag \\
&&(\mathfrak{s}_{\ell _{m}},\mathfrak{s}_{\ell _{m}+1},\dotsc ,\mathfrak{s}%
_{2m})\,.  \label{3}
\end{eqnarray}%
This representation and the numbers \ $\mathfrak{s}_{j}$ are uniquely determined if we
require that the first entry of every cycle is bigger than
all other entries in that cycle, and also that $\mathfrak{s}_{1}<\mathfrak{s}%
_{\ell _{2}}<\mathfrak{s}_{\ell _{3}}<\dotsb <\mathfrak{s}_{\ell _{m}}$. The new
permutation $\mathfrak{\bar{s}}$ is then obtained by forgetting brackets and setting
$\mathfrak{\bar{s}}(j):=\mathfrak{s}_{j}$. We just have to see that this actually yields a
bijection $\mathfrak{s}\mapsto \mathfrak{\bar{s}} $ between forth-back and zigzag
permutations. To do this we procede as follows.

Assume first that $\mathfrak{s}$ is forth-back. Then all cycles necessarily have even
length and the permutation $\mathfrak{\bar{s}}$ is obviously
zigzag, $\mathfrak{s}_{1}>\mathfrak{s}_{2}<\mathfrak{s}_{3}>\mathfrak{s}%
_{4}<\dotsb >\mathfrak{s}_{2m}$. Conversely, let us show that every zigzag
permutation $\bar{\mathfrak{s}}$ has a unique pre-image $\mathfrak{s}$, and
that that pre-image is forth-back.  To construct a pre-image $\mathfrak{s}$ of $%
\bar{\mathfrak{s}}$, we only need to find suitable numbers $\ell _{j}$,
which indicate where we have to insert brackets into the sequence $(%
\mathfrak{s}_{1},\mathfrak{s}_{2},\dotsc ,\mathfrak{s}_{2m}):=(\mathfrak{s}%
(1),\mathfrak{s}(2),\dotsc ,\mathfrak{s}(2m))$ to actually get a pre-image. However, if
we have already found $\ell _{2},\ell _{3},\dotsc ,\ell _{j}$,
then $\ell _{j+1}$ is necessarily the first index $x$ with $\mathfrak{s}_{x}>%
\mathfrak{s}_{\ell _{j}}$. Using this, we can construct a pre-image $%
\mathfrak{s}$ of $\bar{\mathfrak{s}}$ in $\cS_{2m}$, and it is uniquely determined.
Moreover, if $\bar{\mathfrak{s}}$ is zigzag then this construction ensures that
$\mathfrak{s}_{1}$ and the $\mathfrak{s}_{\ell _{j}}$ are peaks and their neighbors and
$\mathfrak{s}_{2m}$ are valleys.
Since also $\mathfrak{s}_{1}>\mathfrak{s}_{\ell _{2}-1}$, $\mathfrak{s}%
_{\ell _{2}}>\mathfrak{s}_{\ell _{3}-1}$, \dots , $\mathfrak{s}_{\ell _{m}}>%
\mathfrak{s}_{2m}$, insertion of brackets before the peaks $\ell _{j}$ yields forth-back
cycles in $\mathfrak{s}$.

With the bijection established, it is now clear that there are as many
forth-back permutations as there are zigzag permutations in $\cS%
_{2m} $. This number is the Euler zigzag number $A_{2m}$.
\end{proof}

The number of forth-back permutations with just one cycle is given by the following
lemma, which we present here as we think that it can be helpful in future research. If
$\cC_n$ denotes the subset of cyclic permutations in $\cS_n$, we have the following:

\begin{lemma}
\label{lem.cfb} The number of forth-back permutations in $\cC_{2m}$ is $A_{2m-1}$.
\end{lemma}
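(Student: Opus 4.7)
The natural plan is to reuse the \emph{transformation fondamentale} from the proof of Lemma\,\ref{lem.fb} and simply track what the restriction to cyclic permutations does on the zigzag side. Recall that the transformation writes a forth-back $\mathfrak{s}\in\cS_{2m}$ in cycle notation with each cycle starting at its own maximum, and with the cycle-maxima $\mathfrak{s}_1<\mathfrak{s}_{\ell_2}<\dotsb<\mathfrak{s}_{\ell_m}$ listed in increasing order; then it forgets the brackets to obtain a zigzag permutation $\bar{\mathfrak{s}}$.

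I would proceed as follows. First, I restrict the bijection $\mathfrak{s}\mapsto\bar{\mathfrak{s}}$ of Lemma\,\ref{lem.fb} to $\cC_{2m}$. When $\mathfrak{s}$ is a single $2m$-cycle, there is only one bracketed block, and its opening entry $\mathfrak{s}_1$ must be the maximum of the whole set, namely $2m$. Hence $\bar{\mathfrak{s}}(1)=2m$. Conversely, given any zigzag permutation $\bar{\mathfrak{s}}\in\cS_{2m}$ with $\bar{\mathfrak{s}}(1)=2m$, the algorithm described in the proof of Lemma\,\ref{lem.fb} searches, once $\ell_2,\dotsc,\ell_j$ have been placed, for the first index $x$ with $\mathfrak{s}_x>\mathfrak{s}_{\ell_j}$; starting from $\mathfrak{s}_1=2m$, no such index exists, so no further brackets are inserted and the preimage $\mathfrak{s}$ is a single $2m$-cycle. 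Therefore the restricted map is a bijection
\begin{equation*}
\{\mathfrak{s}\in\cC_{2m}:\mathfrak{s}\text{ is forth-back}\}\;\longleftrightarrow\;\{\bar{\mathfrak{s}}\in\cS_{2m}:\bar{\mathfrak{s}}\text{ is zigzag and }\bar{\mathfrak{s}}(1)=2m\}.
\end{equation*}

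Second, I count the right-hand side. Removing the forced first value $\bar{\mathfrak{s}}(1)=2m$, the remaining word $(\bar{\mathfrak{s}}(2),\dotsc,\bar{\mathfrak{s}}(2m))$ is a permutation of $\{1,2,\dotsc,2m-1\}$ that satisfies $\bar{\mathfrak{s}}(2)<\bar{\mathfrak{s}}(3)>\bar{\mathfrak{s}}(4)<\dotsb$, i.e.\ it is a zagzig permutation in $\cS_{2m-1}$. Zagzig permutations in $\cS_{2m-1}$ are equinumerous with zigzag permutations in $\cS_{2m-1}$ via the complementation $j\mapsto 2m-j$ (which swaps ascents and descents and preserves the zig-zag pattern since $2m-1$ is odd), so their number is $A_{2m-1}$.

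I do not expect any real obstacle: the two directions of the bijection on $\cC_{2m}$ are immediate consequences of how the transformation fondamentale treats the overall maximum, and the final count is a routine application of complementation to an alternating word of odd length. The only point deserving a one-line justification is precisely that complementation swaps zigzag and zagzig on odd-length permutations, which is what makes the Euler number $A_{2m-1}$ appear on the nose.
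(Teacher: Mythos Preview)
Your argument is correct and follows essentially the same idea as the paper's proof: write the single cycle with $2m$ in a fixed position, delete $2m$, and read off a zagzig word on $\{1,\dotsc,2m-1\}$, counted by $A_{2m-1}$. The only cosmetic difference is that the paper normalizes the cycle with $\mathfrak{s}_{2m}=2m$ (maximum last) and maps directly to zagzig words, whereas you route through the bijection of Lemma~\ref{lem.fb} (maximum first) and then strip the leading $2m$; the parenthetical ``since $2m-1$ is odd'' is unnecessary, as complementation swaps zigzag and zagzig in every length.
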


\begin{proof}
The cycle notation $\mathfrak{s}=(\mathfrak{s}_{1},\mathfrak{s}_{2},\dotsc ,%
\mathfrak{s}_{2m})$ of cyclic permutations $\mathfrak{s}\in \cS_{2m}$ is not uniquely
determined, as one may rotate the entries cyclically. It becomes uniquely determined if
we require that $\mathfrak{s}_{2m}=2m$. In
this case, removal of the last entry yields a sequence $(\mathfrak{s}_{1},%
\mathfrak{s}_{2},\dotsc ,\mathfrak{s}_{2m-1})$ that is zagzig (with $%
\mathfrak{s}_{1}<\mathfrak{s}_{2}$ as $\mathfrak{s}_{2m}$ was the biggest
entry of $\mathfrak{s}$). If we define $\bar{\mathfrak{s}}\in \cS%
_{2m-1}$ by setting $\bar{\mathfrak{s}}(j):=\mathfrak{s}_{j}$, for $%
j=1,2,\dotsc ,2m-1$, we obtain a bijection $\mathfrak{s}\mapsto \bar{%
\mathfrak{s}}$ from the cyclic forth-back permutations in $\cS_{2m}$ to the zagzig
permutations in $\cS_{2m-1}$. Indeed, every zagzig
permutation $\bar{\mathfrak{s}}$ in $\cS_{2m-1}$ has the cycle $%
\mathfrak{s}:=(\bar{\mathfrak{s}}(1),\bar{\mathfrak{s}}(2),\dotsc ,\bar{%
\mathfrak{s}}(2m-1),2m)$ as unique pre-image. The existence of this
bijection shows that the number of cyclic forth-back permutations in $%
\cS_{2m}$ is equal to the number of zagzig permutations in $\mathcal{%
S}_{2m-1}$, which is $A_{2m-1}$, as for zigzag permutations.
\end{proof}

\end{document}